\def\hangbox to #1 #2{\vskip1pt\hangindent #1\noindent \hbox to #1{#2}$\!\!$}
\begin{document}

\title*{An asymptotic equivalence between two frame perturbation theorems}

\author{B. A. Bailey}

\institute{B. A. Bailey \at Department of Mathematics, Texas A\&M University, College Station, TX 77843-3368, \email{abailey@math.tamu.edu}}

\maketitle

\abstract{
In this paper, two stability results regarding exponential frames are compared.  The theorems, (one proven herein, and the other in \cite{SZ}), each give a constant such that if $\sup_{n \in \mathbb{Z^d}}\| \epsilon_n \|_\infty < C$, and $(e^{i \langle \cdot , t_n \rangle})_{n \in \mathbb{Z}^d}$ is a frame for $L_2[-\pi,\pi]^d$, then $(e^{i \langle \cdot , t_n +\epsilon_n \rangle})_{n \in \mathbb{Z}^d}$ is a frame for $L_2[-\pi,\pi]^d$.  These two constants are shown to be asymptotically equivalent for large values of $d$.}

\section{The perturbation theorems}\label{S:1}

\noindent We define a frame for a separable Hilbert space $H$ to be a sequence $(f_n)_n \subset H$ such that for some $0<A \leq B,$
\begin{equation}
A^2 \| f \|^2 \leq \sum_n  |\langle f , f_n \rangle|^2 \leq B^2 \| f \|^2, \quad f \in H.\nonumber
\end{equation}
The best $A^2$ and $B^2$ satisfying the inequality above are said to be the frame bounds for the frame.  If $(e_n)_n$ is an orthonormal basis for $H$, the synthesis operator $L e_n = f_n$ is bounded, linear, and onto, iff $(f_n)_n$ is a frame.  Equivalently, $(f_n)_n$ is a frame iff the operator $L^*$ is an isomorphic embedding, (see \cite{CA}).  In this case, $A$ and $B$ are the best constants such that 
$$A\| f \| \leq \| L^* f \| \leq B \|f\|, \quad f \in H.$$

\noindent The simplest stability result regarding exponential frames for $L_2[-\pi,\pi]$ is the theorem below, which follows immediately from \cite[Theorem 13, p 160]{Y}.

\begin{theorem}\label{simplest}
Let $(t_n)_{n \in \mathbb{Z}} \subset \mathbb{R}$ be a sequence such that $(h_n)_{n \in \mathbb{Z}} :=  \big(\frac{1}{\sqrt{2\pi}}e^{i t_n x}\big)_{n \in \mathbb{Z}}$ is a frame for $L_2[-\pi,\pi]$ with frame bounds $A^2$ and $B^2$.  If $(\tau_n)_{n \in \mathbb{Z}} \subset \mathbb{R}$ and $(f_n)_{n \in \mathbb{Z}} := \big(\frac{1}{\sqrt{2\pi}} e^{i \tau_n x}\big)_{n \in \mathbb{Z}}$ is a sequence such that
\begin{equation}
\sup_{n \in \mathbb{Z}}| \tau_n - t_n | < \frac{1}{\pi}\ln\left(1+\frac{A}{B}\right),
\end{equation}
then the sequence $(f_n)_{n \in \mathbb{Z}}$ is also a frame for $L_2[-\pi,\pi]$.
\end{theorem}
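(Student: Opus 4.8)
The plan is to reduce the stability statement to the operator-theoretic characterization of frames given in the excerpt, and then to invoke the Paley–Wiener stability principle in the form cited from Young. Write $L^*$ for the analysis operator associated to the original frame $(h_n)_n$, so that $A\|f\| \le \|L^* f\| \le B\|f\|$ for all $f \in L_2[-\pi,\pi]$, and let $M^*$ be the analysis operator for the perturbed system $(f_n)_n$. The key observation is that frameness of $(f_n)_n$ is equivalent to $M^*$ being an isomorphic embedding, and the standard route to this is to control the difference $\|(L^* - M^*)f\|$ uniformly in terms of $\|L^* f\|$: if one can show
\begin{equation}
\|(L^* - M^*)f\| \le \lambda \|L^* f\|, \quad f \in L_2[-\pi,\pi],\nonumber
\end{equation}
with $\lambda < 1$, then $M^*$ is bounded below (hence an embedding) and $(f_n)_n$ is a frame.

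First I would estimate the individual discrepancies $\langle f, h_n\rangle - \langle f, f_n\rangle = \frac{1}{\sqrt{2\pi}}\int_{-\pi}^{\pi} f(x)\overline{(e^{it_n x} - e^{i\tau_n x})}\,dx$. The natural device is to expand $e^{i\tau_n x} - e^{it_n x} = e^{it_n x}(e^{i(\tau_n - t_n)x} - 1)$ and then to develop $e^{i(\tau_n - t_n)x} - 1$ in its power series in $x$. Since $|x|\le\pi$ on the interval, the $k$-th term contributes a multiple of $|\tau_n - t_n|^k \pi^k / k!$, and summing the geometric-like series in $k$ produces the factor $e^{\pi|\tau_n - t_n|} - 1$. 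This is precisely the mechanism that yields the logarithmic threshold: requiring the aggregate perturbation constant to be strictly less than $A/B$ forces $\sup_n |\tau_n - t_n| < \frac{1}{\pi}\ln(1 + A/B)$. Carrying the power-series terms through the $L_2$ pairing and recognizing each as an instance of the original analysis operator applied to $x^k f(x)/k!$ is the crucial algebraic step.

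The main obstacle, and the heart of Young's Theorem 13, is to bound $\|(L^* - M^*)f\|$ by $\|L^* f\|$ rather than by $\|f\|$. A crude estimate gives only a bound in terms of $\|f\|$ and the upper frame bound, which would degrade the threshold. The sharper argument uses the identity $x^k f(x) = (\text{derivative/multiplication operator})^k$ acting compatibly with the exponentials, so that each power-series term $\langle x^k f/k!,\, h_n\rangle$ is itself the $n$-th analysis coefficient of a transformed function whose $\ell_2$-norm over $n$ is controlled by $B$ times an $L_2$ norm; assembling these across $k$ and using that the exponentials $(h_n)_n$ satisfy the \emph{upper} frame inequality with constant $B$, while simultaneously exploiting the \emph{lower} bound $A\|f\| \le \|L^* f\|$, is what produces the ratio $A/B$ in the threshold. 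I expect the delicate point to be the interchange of summation (over $n$ and over $k$) and the verification that the resulting series converges with the claimed constant, since a careless bound loses the sharp $\ln(1+A/B)$ and replaces it with something weaker.

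Finally, once the inequality $\|(L^* - M^*)f\| \le \lambda\|L^* f\|$ is established with $\lambda = \frac{B}{A}\big(e^{\pi\sup_n|\tau_n-t_n|} - 1\big) < 1$, the conclusion is immediate: $\|M^* f\| \ge \|L^* f\| - \|(L^*-M^*)f\| \ge (1-\lambda)\|L^* f\| \ge (1-\lambda)A\|f\|$, so $M^*$ is bounded below and, being manifestly bounded above, exhibits $(f_n)_n$ as a frame. Since the present theorem is stated as an immediate consequence of the cited result, I would present the proof essentially as an unwinding of the frame/operator dictionary from the excerpt combined with a direct citation of Young's perturbation theorem, making explicit only the translation of the hypothesis $\sup_n|\tau_n-t_n| < \frac{1}{\pi}\ln(1+A/B)$ into the condition $\lambda<1$.
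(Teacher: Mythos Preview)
Your proposal is essentially correct and lands on the right final inequality, but you manufacture a difficulty that is not there. The paper does not prove Theorem~\ref{simplest} at all; it simply cites Young. However, the paper's proof of the $d$-dimensional version (Theorem~\ref{myframepert}) is precisely the argument you sketch, carried out on the \emph{synthesis} side: one expands $e^{i(\tau_k-t_k)x}-1$ as a power series, uses $|x|\le\pi$ and the upper frame inequality termwise, and obtains $\|L-\tilde L\|\le B(e^{\pi\delta}-1)$. Requiring this to be strictly less than $A$ is exactly the hypothesis $\delta<\frac{1}{\pi}\ln(1+A/B)$, and then $\|\tilde L^*f\|\ge \|L^*f\|-\|(L-\tilde L)^*f\|\ge (A-B(e^{\pi\delta}-1))\|f\|>0$ finishes.

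Your paragraph claiming that ``a crude estimate gives only a bound in terms of $\|f\|$ and the upper frame bound, which would degrade the threshold'' is mistaken: that crude estimate \emph{is} the sharp one here. The bound $\|(L^*-M^*)f\|\le B(e^{\pi\delta}-1)\|f\|$, combined directly with $\|L^*f\|\ge A\|f\|$, already yields the threshold $\frac{1}{\pi}\ln(1+A/B)$ with no loss. There is no need to route the estimate through $\|L^*f\|$ or to invoke any ``sharper argument'' involving multiplication/derivative operators; your final constant $\lambda=\frac{B}{A}(e^{\pi\delta}-1)$ is just the same inequality rewritten. So your outline is right, but you should drop the paragraph about the ``main obstacle''---it describes a non-issue and would confuse a reader.
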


\noindent The following theorem is a very natural generalization of Theorem \ref{simplest} to higher dimensions.

\begin{theorem}\label{myframepert}
Let $(t_k)_{k \in \mathbb{N}} \subset \mathbb{R}^d$ be a sequence such that $(h_k)_{k \in \mathbb{N}} :=  \big(\frac{1}{(2\pi)^{d/2}}e^{\langle (\cdot) , t_k \rangle}\big)_{k \in \mathbb{N}}$ is a frame for $L_2[-\pi,\pi]^d$ with frame bounds $A^2$ and $B^2$.  If $(\tau_k)_{k \in \mathbb{N}} \subset \mathbb{R}^d$ and $(f_k)_{k \in \mathbb{N}} := \big(\frac{1}{(2\pi)^{d/2}} e^{i \langle (\cdot) , \tau_k \rangle}\big)_{k \in \mathbb{N}}$ is a sequence such that
\begin{equation}
\sup_{k \in \mathbb{N}} \Arrowvert \tau_k - t_k \Arrowvert_\infty < \frac{1}{\pi d}\ln\left(1+\frac{A}{B}\right),
\end{equation}
then the sequence $(f_k)_{k \in \mathbb{N}}$ is also a frame for $L_2[-\pi,\pi]^d$.
\end{theorem}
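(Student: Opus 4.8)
The plan is to reduce the statement to a Paley--Wiener--type perturbation principle for synthesis operators and then to establish the one operator-norm estimate that this principle requires. Let $L,M:\ell_2(\N)\to L_2[-\pi,\pi]^d$ denote the synthesis operators determined by $Le_k=h_k$ and $Me_k=f_k$, where $(e_k)$ is the standard orthonormal basis of $\ell_2(\N)$. By the characterization recalled in the excerpt, $(h_k)$ being a frame with bounds $A^2,B^2$ means exactly that $A\|f\|\le\|L^*f\|\le B\|f\|$ for all $f$; in particular $L^*$ is bounded below by $A$ and $\|L\|\le B$. Writing $\vp_k:=\tau_k-t_k$ and $\delta:=\sup_k\|\vp_k\|_\infty$, I would aim to prove the operator bound
\begin{equation}
\|M-L\|\le B\bigl(e^{\pi d\,\delta}-1\bigr).\nonumber
\end{equation}
Granting this, the hypothesis $\delta<\frac{1}{\pi d}\ln(1+A/B)$ forces $\|M-L\|<A$, so for every $f$,
\begin{equation}
\|M^*f\|\ge\|L^*f\|-\|(M-L)^*\|\,\|f\|\ge\bigl(A-\|M-L\|\bigr)\|f\|>0,\nonumber
\end{equation}
whence $M^*$ is an isomorphic embedding and $(f_k)$ is a frame.

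To obtain the estimate it suffices to bound $\|(M-L)c\|=\bigl\|\sum_k c_k(f_k-h_k)\bigr\|$ for finitely supported $c=(c_k)\in\ell_2$. Factoring out $e^{i\langle x,t_k\rangle}$ and Taylor-expanding the exponential on $[-\pi,\pi]^d$ gives
\begin{equation}
\sum_k c_k(f_k-h_k)=\frac{1}{(2\pi)^{d/2}}\sum_{m=1}^\infty\frac{i^m}{m!}\sum_k c_k\,\langle x,\vp_k\rangle^m\,e^{i\langle x,t_k\rangle},\nonumber
\end{equation}
and the multinomial theorem rewrites $\langle x,\vp_k\rangle^m=\sum_{|\alpha|=m}\frac{m!}{\alpha!}x^\alpha\vp_k^\alpha$, so the inner sum becomes $\sum_{|\alpha|=m}\frac{m!}{\alpha!}x^\alpha\sum_k c_k\vp_k^\alpha e^{i\langle x,t_k\rangle}$. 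The three ingredients I would then combine, for each multi-index $\alpha$ with $|\alpha|=m$, are: (i) $|x^\alpha|\le\pi^{|\alpha|}=\pi^m$ on the cube, so multiplication by $x^\alpha$ has $L_2$-norm at most $\pi^m$; (ii) the upper frame bound, since $\bigl\|\sum_k a_k e^{i\langle x,t_k\rangle}\bigr\|=(2\pi)^{d/2}\bigl\|\sum_k a_k h_k\bigr\|\le(2\pi)^{d/2}B\|a\|_2$; and (iii) $|\vp_k^\alpha|\le\delta^{|\alpha|}=\delta^m$, giving $\|(c_k\vp_k^\alpha)_k\|_2\le\delta^m\|c\|_2$. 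Together these bound the $\alpha$-term by $\pi^m(2\pi)^{d/2}B\delta^m\|c\|_2$.

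The crucial collapse is the identity $\sum_{|\alpha|=m}\frac{m!}{\alpha!}=d^m$ (the multinomial expansion of $(1+\cdots+1)^m$), which yields
\begin{equation}
\Bigl\|\sum_k c_k\langle x,\vp_k\rangle^m e^{i\langle x,t_k\rangle}\Bigr\|\le(2\pi)^{d/2}B\,(\pi d\,\delta)^m\|c\|_2;\nonumber
\end{equation}
summing over $m$ via $\sum_{m\ge1}(\pi d\delta)^m/m!=e^{\pi d\delta}-1$ and dividing by $(2\pi)^{d/2}$ gives the claimed bound on $\|M-L\|$. The main obstacle is not any single inequality but the bookkeeping in this middle step: one must expand $\langle x,\vp_k\rangle^m$ so as to cleanly separate the $x$-dependence (controlled by $\pi$) from the $\vp_k$-dependence (controlled by $\delta$) while leaving $e^{i\langle x,t_k\rangle}$ intact, so that the frame upper bound $B$ can be applied to an exponential sum with unperturbed frequencies. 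It is precisely the factor $\sum_{|\alpha|=m}\frac{m!}{\alpha!}=d^m$, which has no analogue in the one-dimensional Theorem \ref{simplest}, that carries the dimension into the final constant and explains the extra $1/d$ in the perturbation radius. The interchange of the norm with the sum over $m$ is justified by absolute convergence, which is immediate since $\delta$ is finite.
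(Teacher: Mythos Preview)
Your argument is correct and follows the same route as the paper: bound the synthesis-operator difference by $B(e^{\pi d\delta}-1)$ and then invoke the Paley--Wiener--type perturbation principle via the lower bound on $L^*$. The paper packages the operator estimate as Lemma~\ref{mainlemma} and defers its proof to \cite{BB}, whereas you supply the details (Taylor expansion, multinomial expansion of $\langle x,\vp_k\rangle^m$, and the collapse $\sum_{|\alpha|=m}m!/\alpha!=d^m$); this is exactly the ``simple estimates'' argument the paper alludes to.
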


\noindent The proof of Theorem \ref{myframepert} relies on the following lemma:

\begin{lemma}\label{mainlemma}
Choose $(t_k)_{k \in \mathbb{N}} \subset \mathbb{R}^d$ such that $(h_k)_{k \in \mathbb{N}} :=  \big(\frac{1}{(2\pi)^{d/2}}e^{\langle (\cdot) , t_k \rangle}\big)_{k \in \mathbb{N}}$ satisfies $$\Big\| \sum_{k=1}^n a_k h_k \Big\|_{L_2 [-\pi,\pi]^d} \leq B \Big(\sum_{k=1}^n |a_k|^2 \Big)^{1/2}, \quad \mathrm{for \ all} \quad (a_k)_{k=1}^n \subset \mathbb{C}.$$  If $(\tau_k)_{k \in \mathbb{N}} \subset \mathbb{R}^d$, and $(f_k)_{k \in \mathbb{N}} := \big(\frac{1}{(2\pi)^{d/2}} e^{i \langle (\cdot) , \tau_k \rangle}\big)_{k \in \mathbb{N}}$, then for all $r,s\geq 1$ and any finite sequence $(a_k)_k$, we have
\begin{equation}
\Bigg\Arrowvert \sum_{k=r}^s a_k ( h_k - f_k ) \Bigg\Arrowvert_{L_2 [-\pi,\pi]^d} \\
{} \leq B\Big( e^{\pi d \big(\sup\limits_{r \leq k \leq s} \Arrowvert \tau_k-t_k \Arrowvert_\infty\big)}-1 \Big) \Big(\sum_{k=r}^s |a_k|^2\Big)^\frac{1}{2}.\nonumber
\end{equation}
\end{lemma}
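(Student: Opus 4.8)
The idea is to bound the difference $h_k - f_k$ in a way that reduces everything to the frame-synthesis inequality already assumed for $(h_k)$. The key observation is that
$$
h_k(x) - f_k(x) = \tfrac{1}{(2\pi)^{d/2}} e^{i\langle x, t_k\rangle}\bigl(1 - e^{i\langle x, \tau_k - t_k\rangle}\bigr),
$$
so the difference is $h_k$ times a multiplier that is small when $\tau_k - t_k$ is small. I would expand the offending factor $1 - e^{i\langle x, \tau_k - t_k\rangle}$ as a power series in the perturbation. Writing $\delta_k := \tau_k - t_k$ and using
$$
1 - e^{i\langle x, \delta_k\rangle} = -\sum_{m=1}^\infty \frac{(i\langle x, \delta_k\rangle)^m}{m!},
$$
the monomial $\langle x, \delta_k\rangle^m$, when multiplied by $e^{i\langle x, t_k\rangle}$, is again an exponential-type object that can be handled by the hypothesis. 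The natural move is to differentiate: note that $x_j^{\,m} e^{i\langle x, t_k\rangle}$ is (up to constants) a partial derivative of $e^{i\langle x, t\rangle}$ in the frequency variable $t$. Since the frame bound $B$ is a statement about sequences $(e^{i\langle\cdot, t_k\rangle})$ for \emph{arbitrary} frequencies $t_k$, each frequency-derivative of the synthesis sum is controlled by the same constant $B$, and this is precisely the leverage that turns a power-series bound into an honest operator bound.

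\emph{Carrying it out.} First I would reduce to a single power of the perturbation. Let $M := \sup_{r\le k\le s}\|\delta_k\|_\infty$. Expanding the exponential and using multilinearity, $\langle x, \delta_k\rangle^m = \bigl(\sum_{j=1}^d x_j (\delta_k)_j\bigr)^m$ expands into a sum of monomials $x^\alpha$ with $|\alpha| = m$; on $[-\pi,\pi]^d$ each coordinate satisfies $|x_j|\le \pi$, and $|(\delta_k)_j|\le M$, so after the multinomial expansion each term of order $m$ contributes a factor bounded by $(\pi M)^m$ with the multinomial coefficients summing to $d^m$. The crucial point is that for each fixed multi-index $\alpha$, the sum $\sum_k a_k\, x^\alpha e^{i\langle x, t_k\rangle}$ equals (up to the factor $i^{-|\alpha|}$) the mixed frequency-partial $\partial_t^\alpha$ of $\sum_k a_k e^{i\langle x, t_k\rangle}$ evaluated at $t = t_k$, and each such differentiated sum is bounded in $L_2[-\pi,\pi]^d$ by $B\bigl(\sum_k|a_k|^2\bigr)^{1/2}$ because the hypothesis on $(h_k)$ holds uniformly over all choices of frequency. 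Summing the geometric/exponential series in $m$ then collapses the $(\pi d M)^m/m!$ contributions into $e^{\pi d M} - 1$, which is exactly the claimed constant.

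\emph{Main obstacle.} The delicate step is justifying that each frequency-monomial $x^\alpha e^{i\langle x, t_k\rangle}$ inherits the bound $B$ from the hypothesis, since a priori the lemma only assumes the synthesis inequality for the specific frequencies $(t_k)$, not for arbitrary perturbed ones. The clean way around this is to observe that the map $t \mapsto \|\sum_k a_k e^{i\langle\cdot, t\rangle}\|$ and its frequency-derivatives are all controlled by $B$ applied to suitable auxiliary frequency sequences, or, more directly, to bound $\|\sum_k a_k (h_k - f_k)\|$ term-by-term in the series and verify that the integral over $[-\pi,\pi]^d$ of each differentiated exponential sum still obeys the same constant. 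I expect the technical heart of the argument to lie precisely in this interchange of the infinite sum (the power series in $m$) with the $L_2$ norm, and in pinning down the uniform constant $B$ for the differentiated sums; once that uniformity is in hand, the combinatorial bookkeeping producing $(\pi d M)^m/m!$ and the resummation to $e^{\pi d M}-1$ is routine.
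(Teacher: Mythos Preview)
Your overall architecture---expand $1-e^{i\langle x,\delta_k\rangle}$ as a power series, open $\langle x,\delta_k\rangle^m$ via the multinomial theorem, and resum to $e^{\pi d M}-1$---is exactly the ``simple estimates'' route the paper has in mind. But the step you flag as the ``main obstacle'' is misdiagnosed, and the fix you propose (frequency derivatives, uniformity of $B$ over perturbed frequency sets) is both unjustified by the hypothesis and unnecessary.

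The confusion is that in passing from $\langle x,\delta_k\rangle^m=\sum_{|\alpha|=m}\binom{m}{\alpha}x^\alpha\delta_k^\alpha$ to your ``crucial point'', the factor $\delta_k^\alpha$ has silently vanished: you are left trying to bound $\bigl\|\sum_k a_k\,x^\alpha h_k\bigr\|$, which the hypothesis does \emph{not} directly control. The correct decomposition keeps $\delta_k^\alpha$ attached to the coefficient and $x^\alpha$ outside the sum:
\[
\sum_{k} a_k\,x^\alpha\,\delta_k^\alpha\,h_k(x)
   \;=\; x^\alpha\sum_{k}\bigl(a_k\,\delta_k^\alpha\bigr)\,h_k(x).
\]
Now $|x^\alpha|\le \pi^{|\alpha|}$ pointwise on $[-\pi,\pi]^d$, so it acts as a bounded multiplier on $L_2$, and the remaining sum is exactly of the form covered by the hypothesis with modified coefficients $a_k\delta_k^\alpha$:
\[
\Bigl\|x^\alpha\sum_k (a_k\delta_k^\alpha)\,h_k\Bigr\|
  \;\le\; \pi^{|\alpha|}\,B\Bigl(\sum_k |a_k\delta_k^\alpha|^2\Bigr)^{1/2}
  \;\le\; \pi^{m}M^{m}\,B\Bigl(\sum_k|a_k|^2\Bigr)^{1/2}.
\]
Summing over $|\alpha|=m$ contributes the factor $\sum_{|\alpha|=m}\binom{m}{\alpha}=d^m$, and then summing over $m\ge 1$ gives $\sum_{m\ge 1}(\pi d M)^m/m!=e^{\pi d M}-1$, which is the claimed constant. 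No appeal to frequency derivatives or to any uniformity of $B$ beyond the single sequence $(t_k)$ is needed; the hypothesis is invoked only with the original frequencies and with the scalar coefficients $a_k\delta_k^\alpha$.
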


\noindent This lemma is a slight generalization of Lemma 5.3, proven in \cite{BB} using simple estimates.  Lemma \ref{mainlemma} is proven similarly.  Now for the proof of Theorem \ref{myframepert}.

\begin{proof}
Define $\delta = \sup_{k \in \mathbb{N}} \Arrowvert \tau_k - t_k \Arrowvert_\infty$.  Lemma \ref{mainlemma} shows that the map $\tilde{L} e_n = f_n$ is bounded and linear, and that
\begin{equation}
\| L - \tilde{L}\| \leq B\big( e^{\pi d \delta} -1 \big) := \beta A\nonumber 
\end{equation}
for some $0 \leq \beta <1$.  This implies
\begin{equation}
\|L^*f-\tilde{L}^*f\| \leq \beta A, \quad \mathrm{when} \quad \|f\|=1.
\end{equation}
Rearranging, we have $$A(1-\beta) \leq \|\tilde{L}^*f\|, \quad \mathrm{when} \quad \|f\|=1.$$  By the previous remarks regarding frames,  $(f_k)_{k \in \mathbb{N}}$ is a frame for $L_2[-\pi,\pi]^d$.
\end{proof}

\noindent Theorem \ref{sunzhou}, proven in \cite{SZ}, is a more delicate frame perturbation result with a more complex proof:\\

\begin{theorem}\label{sunzhou}
Let $(t_k)_{k \in \mathbb{N}} \subset \mathbb{R}^d$ be a sequence such that $(h_k)_{k \in \mathbb{N}} :=  \big(\frac{1}{(2\pi)^{d/2}}e^{\langle (\cdot) , t_k \rangle}\big)_{k \in \mathbb{N}}$ is a frame for $L_2[-\pi,\pi]^d$ with frame bounds $A^2$ and $B^2$.  For $d \geq 1$, define $$D_d(x) := \Big(1-\cos \pi x+\sin \pi x  +\frac{\sin \pi x}{\pi x} \Big)^d -\Big( \frac{\sin \pi x}{\pi x} \Big)^d,$$ and let $x_d$ be the unique number such that $0 < x_d \leq 1/4$ and $D_d(x_d)=\frac{A}{B}$.  
If $(\tau_k)_{k \in \mathbb{N}} \subset \mathbb{R}^d$ and $(f_k)_{k \in \mathbb{N}} := \big(\frac{1}{(2\pi)^{d/2}} e^{i \langle (\cdot) , \tau_k \rangle}\big)_{k \in \mathbb{N}}$ is a sequence such that
\begin{equation}\label{ineq}
\sup_{k \in \mathbb{N}} \| \tau_k - t_k \|_\infty< x_d,
\end{equation}
then the sequence $(f_k)_{k \in \mathbb{N}}$ is also a frame for $L_2[-\pi,\pi]^d$.
\end{theorem}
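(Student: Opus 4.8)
The plan is to run exactly the operator-theoretic argument already used to prove Theorem~\ref{myframepert}. Let $Le_k=h_k$ and $\tilde Le_k=f_k$ be the two synthesis operators. As in that proof, once we know that $\tilde L$ is bounded and that $\|L-\tilde L\|\le\beta A$ for some $\beta<1$, the estimate $\|\tilde L^*f\|\ge\|L^*f\|-\|L-\tilde L\|\,\|f\|\ge(1-\beta)A\|f\|$ shows that $\tilde L^*$ is bounded below, and hence that $(f_k)_k$ is a frame. Everything therefore reduces to the single norm bound
\begin{equation}
\Big\|\sum_k a_k(h_k-f_k)\Big\|_{L_2[-\pi,\pi]^d}\le B\,D_d(\delta)\Big(\sum_k|a_k|^2\Big)^{1/2},\qquad \delta:=\sup_k\|\tau_k-t_k\|_\infty,\nonumber
\end{equation}
which is the sharp replacement for Lemma~\ref{mainlemma}: the crude factor $e^{\pi d\delta}-1$ is to be improved to $D_d(\delta)$.

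To produce this sharp factor I would abandon the elementary estimates behind Lemma~\ref{mainlemma} in favour of a Kadec-type expansion, the device that already underlies Theorem~\ref{simplest}. Writing $\epsilon_k=\tau_k-t_k$ and $h_k-f_k=(2\pi)^{-d/2}e^{i\langle\cdot,t_k\rangle}\big(1-\prod_{j=1}^d e^{i\epsilon_{k,j}x_j}\big)$, the idea is to expand the inner factor against the orthonormal system $(e^{i\langle m,\cdot\rangle})_{m\in\mathbb{Z}^d}$ after first regrouping, in each coordinate, the one-dimensional cosine and sine series the way Kadec does. In a single variable this regrouping assigns to $1-e^{i\epsilon x}$ the sharp weight $P(x):=1-\cos\pi x+\sin\pi x$ (this is the $d=1$ case, where $D_1=P$), while separating off the sinc term $\sigma(x):=\frac{\sin\pi x}{\pi x}$, the mean value over $[-\pi,\pi]$ of a coordinate perturbation. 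Because $[-\pi,\pi]^d$ is a product, the coordinates decouple: a subset $S\subseteq\{1,\dots,d\}$ of genuinely perturbed coordinates contributes a factor $P$ for each coordinate in $S$ and the undisturbed mean value $\sigma$ for each coordinate outside $S$. Using that multiplication by any $e^{i\langle m,\cdot\rangle}$ is unitary, so that every shifted copy of $(h_k)_k$ retains the upper bound $B$, and summing over all nonempty $S$, the $k$-th term acquires the weight $\prod_{j=1}^d\big(P(\epsilon_{k,j})+\sigma(\epsilon_{k,j})\big)-\prod_{j=1}^d\sigma(\epsilon_{k,j})$; since $|\epsilon_{k,j}|\le\delta$ and, as recorded below, $P+\sigma$ is increasing while $\sigma$ is decreasing, this is at most $(P(\delta)+\sigma(\delta))^d-\sigma(\delta)^d=D_d(\delta)$.

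It remains to turn the hypothesis $\delta<x_d$ into $\|L-\tilde L\|<A$. For this I would first verify the elementary calculus facts that $D_d$ is continuous and strictly increasing on $(0,1/4]$, with $D_d(0^+)=0$ and $D_d(1/4)=\big(1+\tfrac{2\sqrt2}{\pi}\big)^d-\big(\tfrac{2\sqrt2}{\pi}\big)^d\ge1\ge A/B$; monotonicity holds because $P(x)+\sigma(x)$ increases on $(0,1/4]$ (there $P'=\pi(\cos\pi x+\sin\pi x)$ dominates $|\sigma'|$) and $\sigma$ decreases, so in $D_d=(P+\sigma)^d-\sigma^d$ the first term rises and the subtracted term falls. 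These facts make the number $x_d$ of the statement well defined and unique, and they show that $\delta<x_d$ forces $D_d(\delta)<D_d(x_d)=A/B$. Substituting into the displayed bound gives $\|L-\tilde L\|\le B\,D_d(\delta)<A$, and the argument of the first paragraph completes the proof.

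The main obstacle is the sharp norm estimate of the second paragraph, and in particular the bookkeeping responsible for the subtracted term $\sigma^d$. A naive expansion of $1-\prod_j e^{i\epsilon_{k,j}x_j}$ summed over all frequencies $m$ diverges, since the one-dimensional coefficients decay only like $1/|m|$; Kadec's pairing of cosine and sine blocks is therefore indispensable already in each coordinate, and it must be carried out in all $d$ coordinates at once while the purely constant term $\prod_j\sigma(\epsilon_{k,j})$ is controlled by the frame inequality itself rather than by a crude triangle inequality. Arranging the cross terms so that they collapse into the clean form $(P+\sigma)^d-\sigma^d$, rather than the substantially weaker $(1+P)^d-1$ that an unrefined estimate yields, is the delicate heart of the argument.
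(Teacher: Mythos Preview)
The paper does not itself prove Theorem~\ref{sunzhou}; it quotes the result from Sun and Zhou \cite{SZ} and only records, in Proposition~\ref{convex}, the monotonicity and convexity of $D_d$ needed later for Theorem~\ref{main4}. So there is no in-paper proof to compare against.

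That said, your outline is precisely the Sun--Zhou strategy: reduce, via the same operator argument as in Theorem~\ref{myframepert}, to the sharp synthesis estimate $\|L-\tilde L\|\le B\,D_d(\delta)$, and obtain the latter by a multivariate Kadec expansion. In each coordinate one writes $e^{i\epsilon x}=\sigma(\epsilon)+R(x)$, where $\sigma(\epsilon)=\tfrac{1}{2\pi}\int_{-\pi}^{\pi}e^{i\epsilon x}\,dx$ is the mean and the Kadec regrouping of $R$ into cosine and half-integer sine blocks has total weight $P(|\epsilon|)=1-\cos\pi|\epsilon|+\sin\pi|\epsilon|$; tensoring over the $d$ coordinates and using that every modulation $e^{i\langle m,\cdot\rangle}$ is unitary (so each shifted copy of $(h_k)_k$ keeps the upper bound $B$) produces the weight $\prod_j(P+\sigma)(|\epsilon_{k,j}|)-\prod_j\sigma(|\epsilon_{k,j}|)$. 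Your monotonicity step is correct: since $P+\sigma$ is increasing and $\sigma$ is decreasing on $(0,1/4]$, the first product is nondecreasing and the second nonincreasing in each $|\epsilon_{k,j}|$, so the difference is maximized at $|\epsilon_{k,j}|=\delta$, giving $D_d(\delta)$; this is exactly the content of Proposition~\ref{convex}, part~1). The diagnosis in your last paragraph is also on target: the gain of $D_d(\delta)$ over the crude $e^{\pi d\delta}-1$ of Lemma~\ref{mainlemma} comes from treating the pure-mean term $\prod_j\sigma(\epsilon_{k,j})$ through the frame inequality rather than through the triangle inequality, which is what turns the naive $(1+P)^d-1$ into $(P+\sigma)^d-\sigma^d$. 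What remains is to actually write out the one-variable Kadec expansion and verify the $\ell^1$ bound $P$ on its coefficients; this is routine but not short, and is where \cite{SZ} does the real work.
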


\section{An asymptotic equivalence}

\noindent It is natural to ask how the constants $x_d$ and $\frac{1}{\pi d}\ln\big(1+\frac{A}{B}\big)$ are related.  Such a relationship is given in the following theorem.

\begin{theorem}\label{main4}
If $x_d$ is the unique number satisfying $0 < x_d <1/4$ and $D_d(x_d)=\frac{A}{B}$, then
$$\lim_{d \rightarrow \infty} \frac{x_d - \frac{1}{\pi d}\ln\big(1+\frac{A}{B}\big)}{\frac{\big[ \ln\big(1+\frac{A}{B} \big) \big]^2}{6\pi\big(1+\frac{B}{A}\big)d^2}} =1. $$
\end{theorem}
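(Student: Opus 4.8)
The plan is to turn the transcendental equation $D_d(x_d)=A/B$ into a one-variable asymptotic problem and solve it by a two-term expansion. Set $r=A/B\in(0,1]$, $L=\ln(1+r)$, and substitute $u=\pi x$, so that with
$$P(u)=1-\cos u+\sin u+\tfrac{\sin u}{u},\qquad Q(u)=\tfrac{\sin u}{u},$$
one has $D_d(x)=P(\pi x)^d-Q(\pi x)^d$, and the defining relation becomes $P(u_d)^d-Q(u_d)^d=r$ with $u_d=\pi x_d$. The goal is to prove $u_d=\frac{L}{d}+\frac{L^2 r}{6(1+r)d^2}+o(d^{-2})$; dividing by $\pi$ and using the identity $\frac{r}{1+r}=\frac{1}{1+B/A}$ then reproduces exactly the claimed limit, since the numerator $x_d-\frac{1}{\pi d}\ln(1+\frac AB)$ equals $\frac{L^2}{6\pi(1+B/A)d^2}+o(d^{-2})$.

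The first analytic ingredient is the pair of Taylor expansions $\ln P(u)=u-\frac{u^2}{6}+O(u^3)$ and $\ln Q(u)=-\frac{u^2}{6}+O(u^4)$ as $u\to 0$, read off from the power series of $\cos,\sin,\tfrac{\sin u}{u}$. I would then pin down the leading order by a bootstrap. Because $0<Q(u)<1$ and $P(u)>1$ on $(0,\pi/4]$, the identity $P(u_d)^d=r+Q(u_d)^d$ forces $r<P(u_d)^d<1+r$; this already shows $x_d\to 0$ (otherwise $P(u_d)^d\to\infty$) and, via $\ln r<d\ln P(u_d)\le L$ together with $\ln P(u_d)\sim u_d$, that $u_d=O(1/d)$. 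With $u_d=O(1/d)$ in hand, $d\ln Q(u_d)=O(1/d)\to 0$, hence $Q(u_d)^d\to 1$, so $P(u_d)^d\to 1+r$ and $d u_d\to L$.

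Next comes the second-order step. Writing $u_d=L/d+\eta_d$ with $\eta_d=o(1/d)$ and feeding this into the expansions gives $d\ln P(u_d)=L+d\eta_d-\frac{L^2}{6d}+o(1/d)$ and $Q(u_d)^d=1-\frac{L^2}{6d}+o(1/d)$. Putting $\xi_d:=d\ln P(u_d)-L$, so $P(u_d)^d=(1+r)e^{\xi_d}$, the equation $P(u_d)^d-Q(u_d)^d=r$ rearranges to $e^{\xi_d}=1-\frac{L^2}{6(1+r)d}+o(1/d)$, whence $\xi_d=-\frac{L^2}{6(1+r)d}+o(1/d)$. Comparing with $\xi_d=d\eta_d-\frac{L^2}{6d}+o(1/d)$ yields $d\eta_d=\frac{L^2}{6d}\cdot\frac{r}{1+r}+o(1/d)$, i.e. $d^2\eta_d\to\frac{L^2 r}{6(1+r)}$, which is precisely the desired coefficient.

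The main obstacle is the error bookkeeping in this successive sharpening: the estimates must be improved one order at a time (from $u_d\to 0$, to $u_d=O(1/d)$, to $du_d\to L$, to $d^2\eta_d\to w$), and at each stage the quadratic remainder $O(\xi_d^2)$ from expanding $e^{\xi_d}$ and the cubic Taylor remainders must be verified to be genuinely $o(1/d)$ in $\xi_d$ — equivalently $o(1/d^2)$ in $\eta_d$ — so that they disappear after dividing by the normalizing factor $\frac{L^2}{6\pi(1+B/A)d^2}$. Particular care is needed to confirm that the cross terms $\frac{L\eta_d}{3}$ and $\frac{d\eta_d^2}{6}$ arising from $\tfrac{1}{6}d u_d^2$ are absorbed into the $o(1/d)$ error given only $\eta_d=o(1/d)$, which is what makes the bootstrap self-consistent.
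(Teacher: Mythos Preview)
Your argument is correct. The Taylor expansions $\ln P(u)=u-\tfrac{u^2}{6}+O(u^3)$ and $\ln Q(u)=-\tfrac{u^2}{6}+O(u^4)$ are right, the bootstrap from $u_d\to 0$ to $u_d=O(1/d)$ to $du_d\to L$ is clean (the inequality $r<P(u_d)^d\le 1+r$ does the first two steps, and $Q(u_d)^d\to 1$ then gives the third), and the second-order computation closes properly: the cross terms $\tfrac{L\eta_d}{3}$ and $\tfrac{d\eta_d^2}{6}$ are indeed $o(1/d)$ once $\eta_d=o(1/d)$, and the passage from $e^{\xi_d}=1-\tfrac{L^2}{6(1+r)d}+o(1/d)$ to $\xi_d=-\tfrac{L^2}{6(1+r)d}+o(1/d)$ is justified simply by taking logarithms.

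Your route is genuinely different from the paper's. The paper does not expand the implicit equation directly; instead it sets $\omega_d=\tfrac{1}{\pi d}\ln(1+\tfrac{A}{B})$, proves that $D_d$ is increasing and convex on a fixed interval $(0,\Delta)$, and then applies the mean value theorem to write $x_d-\omega_d=\dfrac{A/B-D_d(\omega_d)}{D_d'(\xi)}$ for some $\xi\in(\omega_d,x_d)$. Convexity lets it sandwich $D_d'(\xi)$ between $D_d'(\omega_d)$ and $D_d'(x_d)$, and three separate limit computations (namely $d\big(\tfrac{A}{B}-D_d(\omega_d)\big)\to\tfrac{A}{6B}L^2$ and $\tfrac{1}{d}D_d'(\omega_d),\ \tfrac{1}{d}D_d'(x_d)\to\pi(1+\tfrac{A}{B})$) finish via squeeze. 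Your approach bypasses the convexity proposition and the somewhat delicate squeeze used in the paper to show $g(\pi x_d)^d\to 1$; you get $Q(u_d)^d\to 1$ for free from $u_d=O(1/d)$. The paper's method, on the other hand, makes the structure ``gap in value divided by slope'' transparent and isolates each limit cleanly. Both are valid; yours is shorter and more self-contained.
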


\noindent We prove the theorem with a sequence of propositions.

\begin{proposition}\label{first1}
\noindent Let $d$ be a positive integer.  If
\begin{eqnarray}
f(x)  &:= & 1-\cos(x)+\sin(x)+\mathrm{sinc}(x),\nonumber\\
g(x)  &:= & \mathrm{sinc}(x)\nonumber,
\end{eqnarray}
then 
\begin{eqnarray}
& 1) & \quad f'(x)+g'(x) > 0, \quad x \in (0,\pi/4),\nonumber\\
& 2) & \quad g'(x) < 0, \quad x \in (0,\pi/4),\nonumber \\
& 3) & \quad f''(x) > 0, \quad x \in (0, \Delta) \quad \mathrm{for \ some} \quad 0<\Delta < 1/4\nonumber.
\end{eqnarray}
\end{proposition}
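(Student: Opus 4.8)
The plan is to establish the three inequalities separately, treating 2) first since its key estimate also feeds into 1). Throughout I take $\mathrm{sinc}(x)=\sin(x)/x$ (extended by $\mathrm{sinc}(0)=1$), so that the base of $D_d$ equals $f(\pi x)$ and the relevant range for the inner variable is $(0,\pi/4)$. For part 2), I would set $h(x):=x\cos x-\sin x$, so that $g'(x)=\mathrm{sinc}'(x)=h(x)/x^2$. Since $h(0)=0$ and $h'(x)=-x\sin x<0$ on $(0,\pi)$, the function $h$ is strictly negative on $(0,\pi/4)$, giving $g'(x)<0$ there immediately.

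For part 1), I would write $f'(x)+g'(x)=\sin x+\cos x+2\,\mathrm{sinc}'(x)$ and bound the two pieces from below. First, $\sin x+\cos x$ has derivative $\cos x-\sin x>0$ on $(0,\pi/4)$, so $\sin x+\cos x\geq 1$ on $[0,\pi/4]$. Second, the Maclaurin expansion of $x\cos x-\sin x$ is the alternating series $-\tfrac{x^3}{3}+\tfrac{x^5}{30}-\cdots$, whose general term $(-1)^n\tfrac{2n}{(2n+1)!}x^{2n+1}$ has moduli that decrease on $(0,\pi/4)$; the Leibniz estimate then yields $\mathrm{sinc}'(x)=(x\cos x-\sin x)/x^2\geq -x/3$. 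Combining the two bounds gives $f'(x)+g'(x)\geq 1-\tfrac{2x}{3}>1-\tfrac{\pi}{6}>0$ on $(0,\pi/4)$.

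For part 3), I would compute $f''(x)=\cos x-\sin x+\mathrm{sinc}''(x)$. Because $\mathrm{sinc}$ is entire, $f''$ is continuous at $0$, and the expansion $\mathrm{sinc}''(x)=-\tfrac{1}{3}+\tfrac{x^2}{10}-\cdots$ gives $f''(0)=1-0-\tfrac{1}{3}=\tfrac{2}{3}>0$. Continuity of $f''$ then furnishes some $\Delta\in(0,1/4)$ on which $f''>0$, which is all that is required.

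I expect part 1) to be the only genuine obstacle. The summand $2\,\mathrm{sinc}'(x)$ is negative on the interval and must be controlled sharply enough that the positive contribution $\sin x+\cos x\geq 1$ dominates; the alternating-series lower bound $\mathrm{sinc}'(x)\geq -x/3$ is the crucial estimate. The step requiring the most care is verifying that the Maclaurin coefficients of $x\cos x-\sin x$ are decreasing in modulus throughout $(0,\pi/4)$, which I would check via the ratio of consecutive terms, bounded by $x^2/10<1$ for $x<\pi/4$.
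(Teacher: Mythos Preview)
Your proof is correct; each of the three parts is handled by a clean elementary argument, and the alternating-series bound $\mathrm{sinc}'(x)\ge -x/3$ is indeed sharp enough to make part 1) go through on the full interval $(0,\pi/4)$. The paper itself omits the proof entirely, stating only that it ``involves only elementary calculus,'' so there is no approach to compare against---your argument is exactly the kind of verification the paper leaves to the reader.
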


\noindent The proof of Proposition \ref{first1} involves only elementary calculus and is omitted.




\begin{proposition}\label{convex}
The following statements hold: \\
\noindent 1)  For $d>0$, $D_d(x)$ and $D'_d(x)$ are positive on $(0,1/4)$.\\
\noindent 2)  For all $d>0$, $D''_d(x)$ is positive on $(0,\Delta)$.
\end{proposition}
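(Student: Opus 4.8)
The plan is to transfer the whole question to the one-variable functions of Proposition \ref{first1} through the identity $D_d(x)=f(\pi x)^d-g(\pi x)^d$, which holds because $g(y)=\mathrm{sinc}(y)=\tfrac{\sin y}{y}$. Writing $y=\pi x\in(0,\pi/4)$, I would first assemble the pointwise facts I need: $f>g>0$ (since $f-g=1-\cos y+\sin y>0$ and $g=\mathrm{sinc}>0$); $f'>-g'=|g'|>0$, hence $(f')^2>(g')^2$ (item 2 gives $g'<0$, and item 1 then forces $f'>-g'$); $f''>0$ on $(0,\Delta)$ (item 3); and $g''<0$ near $0$ (a direct Taylor estimate gives $g''(0)=-\tfrac13$, so shrink $\Delta$ so that both $f''>0$ and $g''<0$ there).

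Part 1 is then immediate: $f>g>0$ gives $D_d>0$, and the chain rule gives $D'_d(x)=\pi d\,[\,f^{d-1}f'-g^{d-1}g'\,]$ at $y=\pi x$, whose two summands are positive because $f,g>0$, $f'>0$, $g'<0$. For part 2 I would differentiate once more to get
$$D''_d(x)=\pi^2 d\Big\{(d-1)\big[\,f^{d-2}(f')^2-g^{d-2}(g')^2\,\big]+\big[\,f^{d-1}f''-g^{d-1}g''\,\big]\Big\},$$
again at $y=\pi x$. Since $f,g$ (and hence $f^{d-2},g^{d-2}$, as $f,g>0$) extend continuously to $y=0$ with the values $f(0)=g(0)=1$, $f'(0)=1$, $g'(0)=0$, $f''(0)=\tfrac23$, $g''(0)=-\tfrac13$, the braced expression extends continuously to $x=0$ with value $(d-1)\cdot1+1=d>0$, so $\lim_{x\to0^+}D''_d(x)=\pi^2 d^2>0$. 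Positivity on some interval $(0,\Delta)$ then follows by continuity, which is all part 2 asserts.

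The main obstacle lies in the first-derivative bracket $(d-1)[\,f^{d-2}(f')^2-g^{d-2}(g')^2\,]$ once one wants a $\Delta$ independent of $d$, as Theorem \ref{main4} ultimately needs. For $d\ge2$ this is easy: $f>g$ gives $f^{d-2}\ge g^{d-2}$, $(f')^2>(g')^2$ supplies the rest, and with $(d-1)>0$ the bracket is positive, so the second-derivative bracket plus this yields $D''_d>0$ on the fixed interval of item 3. For $0<d<2$, however, $f^{d-2}<g^{d-2}$ works against $(f')^2>(g')^2$, and the term is only rescued near $0$ by $g'(0)=0$; a uniform $\Delta$ is in fact impossible as $d\to0^+$, since $\lim_{d\to0^+}D''_d(x)/(\pi^2 d)=(\ln f)''-(\ln g)''=-\pi x+O(x^2)$ is negative for small $x>0$. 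Thus the honest reading lets $\Delta$ depend on $d$, and the large-$d$ regime of interest is handled uniformly by the $d\ge2$ argument.
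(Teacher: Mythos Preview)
Your approach mirrors the paper's: both write $D_d(x)=f(\pi x)^d-g(\pi x)^d$, compute the first two derivatives, and reduce positivity to the elementary inequalities $f>g>0$, $f'>-g'>0$ (hence $(f')^2>(g')^2$), and $f''>0$ on $(0,\Delta)$. There are two minor differences worth noting. First, the paper does not shrink $\Delta$ to force $g''<0$; instead it treats both signs of $g''$ pointwise, observing that when $g''(\pi x)>0$ one has $f''-g''=\cos(\pi x)-\sin(\pi x)>0$, which together with $f^{d-1}\ge g^{d-1}$ handles the second bracket. Second, the paper asserts part 2 for all $d>0$ on the fixed $\Delta$ without further comment, whereas you correctly flag that the first bracket $(d-1)\big[f^{d-2}(f')^2-g^{d-2}(g')^2\big]$ is only transparently nonnegative for $d\ge 2$ (it vanishes at $d=1$), and that a uniform $\Delta$ in fact cannot survive as $d\to 0^+$. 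Your continuity-at-zero argument cleanly covers the remaining small $d$ at the cost of a $d$-dependent $\Delta$, which is harmless since the downstream application (Theorem \ref{main4}) only uses $d\to\infty$, where your $d\ge 2$ argument already gives the fixed interval.
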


\begin{proof}
Note $D_d(x) = f(\pi x)^d-g(\pi x)^d$ is positive.  This expression yields $$D'_d(x)/(d\pi) =f(\pi x)^{d-1}f'(\pi x)-g(\pi x)^{d-1}g'(\pi x)>0 \quad \mathrm{on} \quad (0,1/4)$$ by Proposition \ref{first1}. Differentiating again, we obtain
\begin{eqnarray}
D''_d(x)/(d\pi^2) & = &(d-1)\big[ f(\pi x)^{d-2}(f'(\pi x))^2- g(\pi x)^{d-2} (g'(\pi x))^2 \big] +\nonumber\\ 
 &+& [f(\pi x)^{d-1} f''(\pi x)- g(\pi x)^{d-1} g''(\pi x)] \quad \text{on} \quad (0,1/4).\nonumber
\end{eqnarray}
If $ g''(\pi x) \leq 0$ for some $x \in (0, 1/4)$, then the second bracketted term is positive.  If $ g''(\pi x)>0$ for some $x \in (0, 1/4)$, then the second bracketted term is positive if $f''(\pi x)-g''(\pi x) >0$, but $$f''(\pi x)-g''(\pi x)=\pi^2(\cos(\pi x)-\sin(\pi x))$$ is positive on $(0,1/4)$.

\noindent To show the first bracketted term is positive, it suffices to show that $$f'(\pi x)^2 > g'(\pi x)^2 = (f'(\pi x)+g'(\pi x))(f'(\pi x)-g'(\pi x))>0$$ on $(0, \Delta)$.  Noting $f'(\pi x)-g'(\pi x)=\pi (\cos(\pi x)+\sin(\pi x))>0$, it suffices to show that $f'(\pi x)+g'(\pi x)>0$, but this is true by Proposition \ref{first1}.
\end{proof}

\noindent Note that Proposition \ref{convex} implies $x_d$ is unique.

\begin{corollary}
We have  $\lim_{d \rightarrow \infty} x_d = 0$.
\end{corollary}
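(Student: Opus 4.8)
The plan is to show that for each fixed $x \in (0,1/4)$ the quantity $D_d(x)$ diverges to $+\infty$ as $d \to \infty$, and then to combine this with the monotonicity of $D_d$ from Proposition \ref{convex} to force $x_d$ below $x$ for all large $d$. Since $D_d(x) = f(\pi x)^d - g(\pi x)^d$, the whole argument reduces to understanding the two bases $f(\pi x)$ and $g(\pi x)$ on the interval $(0,1/4)$.

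First I would record the pointwise size of $f(\pi x)$ and $g(\pi x)$. One checks directly that $f(0) = g(0) = 1$. By Proposition \ref{first1}, $f'(y) + g'(y) > 0$ and $g'(y) < 0$ on $(0,\pi/4)$, so $f'(y) > -g'(y) > 0$ there; hence $f$ is strictly increasing and $g$ is strictly decreasing on $(0,\pi/4)$. Consequently, for every $x \in (0,1/4)$ we have $f(\pi x) > 1$ and $0 < g(\pi x) < 1$ (positivity of $g$ being clear since $\sin y > 0$ for $y \in (0,\pi)$). With $f(\pi x) > 1$ and $g(\pi x) \in (0,1)$, the first term $f(\pi x)^d$ diverges while $g(\pi x)^d \to 0$, so $D_d(x) \to +\infty$ for each fixed $x \in (0,1/4)$.

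Finally, fix an arbitrary $\epsilon \in (0,1/4)$. Because the frame bounds satisfy $A \le B$, the target value $A/B$ is a fixed constant at most $1$, whereas $D_d(\epsilon) \to +\infty$; hence there is $d_0$ with $D_d(\epsilon) > A/B$ for all $d \ge d_0$. By Proposition \ref{convex}(1), $D_d$ is strictly increasing on $(0,1/4)$, so the relation $D_d(x_d) = A/B < D_d(\epsilon)$ forces $x_d < \epsilon$ whenever $d \ge d_0$. Since $\epsilon$ was arbitrary, this shows $x_d \to 0$.

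I do not foresee a genuine obstacle: the entire conclusion rests on the single observation that $f(\pi x) > 1$ on $(0,1/4)$, which Proposition \ref{first1} delivers via the monotonicity of $f$, together with the monotonicity of $D_d$. The only point deserving a word of care is that $A/B$ is a fixed constant bounded by $1$, so it is eventually dominated by the diverging values $D_d(\epsilon)$.
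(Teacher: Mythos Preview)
Your argument is correct and follows essentially the same route as the paper: fix a small point, show $D_d$ diverges there because $f(\pi x)>1$ while $|g(\pi x)|<1$, and then invoke the monotonicity from Proposition~\ref{convex} to squeeze $x_d$ below that point. The paper phrases the fixed point as $1/n$ (with $1/n<\Delta$, a harmless extra restriction) and is terser about why $f(\pi x)>1$, but the logic is identical.
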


\begin{proof}
Fix $n>0$ with $1/n <\Delta$, then $\lim_{d \rightarrow \infty} D_d(1/n) = \infty$ (since $f$ increasing implies $0<-\cos(\pi /n)+\sin(\pi/n)+\mathrm{sinc}(\pi/n)).$  For sufficiently large $d$, $D_d(1/n)>\frac{A}{B}$.  But $\frac{A}{B} = D_d(x_d) < D_d(1/n)$, so $x_d <1/n$ by Proposition \ref{convex}.
\end{proof}

\begin{proposition}\label{mainprop}
Define $\omega_d = \frac{1}{\pi d}\ln\big(1+\frac{A}{B}\big)$. We have
\begin{eqnarray}
\lim_{d \rightarrow \infty} d\Big(\frac{A}{B}-D_d(\omega_d)\Big) = \frac{A}{6B}\Big[\ln\Big(1+\frac{A}{B}\Big)\Big]^2,\nonumber\\
\lim_{d \rightarrow \infty} \frac{1}{d} D'_d(\omega_d) = \pi\Big(1+\frac{A}{B}\Big),\nonumber\\
\lim_{d \rightarrow \infty} \frac{1}{d}D'_d (x_d) =  \pi\Big(1+\frac{A}{B}\Big).\nonumber
\end{eqnarray}
\end{proposition}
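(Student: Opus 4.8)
The plan is to reduce all three limits to the asymptotics of $f(u)^d$ and $g(u)^d$ as $u\to0$ and $d\to\infty$, where $D_d(x)=f(\pi x)^d-g(\pi x)^d$ exactly as in the proof of Proposition~\ref{convex}. Writing $L:=\ln(1+\frac{A}{B})$, so that $\pi\omega_d=L/d$ and $e^L=1+\frac{A}{B}$, I would first record the Taylor expansions of the logarithms at the origin, namely $\ln f(u)=u-\frac{u^2}{6}+O(u^3)$ and $\ln g(u)=-\frac{u^2}{6}+O(u^4)$, the latter having no odd terms since $g$ is even. The point of passing to logarithms is that the $d$-th powers become $f(u)^d=e^{d\ln f(u)}$ and $g(u)^d=e^{d\ln g(u)}$, so that substituting a sequence $u=u_d\to0$ with $d u_d$ controlled turns these into exponentials with explicit correction terms.

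For the first limit I would substitute $u=\pi\omega_d=L/d$ directly. Then $d\ln f(L/d)=L-\frac{L^2}{6d}+O(d^{-2})$ and $d\ln g(L/d)=-\frac{L^2}{6d}+O(d^{-3})$, whence $f(\pi\omega_d)^d=e^L\big(1-\frac{L^2}{6d}+O(d^{-2})\big)$ and $g(\pi\omega_d)^d=1-\frac{L^2}{6d}+O(d^{-2})$. Subtracting and using $e^L-1=\frac{A}{B}$ gives $D_d(\omega_d)=\frac{A}{B}-\frac{A}{B}\frac{L^2}{6d}+O(d^{-2})$, so $d\big(\frac{A}{B}-D_d(\omega_d)\big)\to\frac{A}{6B}L^2$, which is the first claim. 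For the second limit I would use the factored derivative already obtained in the proof of Proposition~\ref{convex}, $\frac1d D'_d(x)=\pi\big[f(\pi x)^{d-1}f'(\pi x)-g(\pi x)^{d-1}g'(\pi x)\big]$. At $x=\omega_d$ one has $f(\pi\omega_d)^{d-1}=f(L/d)^d/f(L/d)\to(1+\frac{A}{B})/1$ and $g(\pi\omega_d)^{d-1}=g(L/d)^d/g(L/d)\to1$, while $f'(L/d)\to f'(0)=1$ and $g'(L/d)\to g'(0)=0$; assembling these gives $\frac1d D'_d(\omega_d)\to\pi(1+\frac{A}{B})$.

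The third limit is the one needing a genuinely new idea, since $x_d$ is defined only implicitly by $D_d(x_d)=\frac{A}{B}$ and there is no closed form to expand around. The key step is to establish the rate $\pi d\,x_d\to L$, which I would do by sandwiching: fix $\epsilon\in(0,L)$ and set $x_d^{\pm}:=(L\pm\epsilon)/(\pi d)$, which lie in $(0,1/4)$ once $d$ is large. The same logarithmic expansions give $D_d(x_d^{\pm})=f((L\pm\epsilon)/d)^d-g((L\pm\epsilon)/d)^d\to e^{L\pm\epsilon}-1$, and since $e^{L-\epsilon}-1<\frac{A}{B}<e^{L+\epsilon}-1$ these straddle $\frac{A}{B}=D_d(x_d)$ for all large $d$. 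Because $D_d$ is strictly increasing on $(0,1/4)$ by Proposition~\ref{convex}, it follows that $x_d^-<x_d<x_d^+$, i.e.\ $L-\epsilon<\pi d\,x_d<L+\epsilon$; letting $\epsilon\to0$ yields $\pi d\,x_d\to L$. With this rate in hand, $d(\pi x_d)^2\to0$ forces $g(\pi x_d)^d=e^{-d(\pi x_d)^2/6+o(1)}\to1$, so $f(\pi x_d)^d=\frac{A}{B}+g(\pi x_d)^d\to1+\frac{A}{B}$, and since $f(\pi x_d)\to1$ we get $f(\pi x_d)^{d-1}\to1+\frac{A}{B}$. Feeding these into the factored derivative exactly as in the second limit, together with $f'(\pi x_d)\to1$ and $g'(\pi x_d)\to0$, gives $\frac1d D'_d(x_d)\to\pi(1+\frac{A}{B})$.

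The main obstacle is precisely this third step: without a formula for $x_d$ one cannot Taylor-expand at it, and the monotonicity of $D_d$ from Proposition~\ref{convex} is exactly what converts the pointwise limits $D_d(x_d^{\pm})\to e^{L\pm\epsilon}-1$ into a two-sided bound on $x_d$. Everything else is bookkeeping with the two logarithmic expansions, and the only care required is to carry enough Taylor terms, through order $u^2$ in $\ln f$, so that the $1/d$ coefficient in the first limit comes out correctly.
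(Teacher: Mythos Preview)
Your argument is correct. For the first two limits you do essentially what the paper does, the only difference being cosmetic: you expand $\ln f$ and $\ln g$ to second order and read off the $1/d$ coefficient, while the paper reaches the same numbers through repeated applications of L'Hospital's rule to $\big((1+h(x))^{\ln c/x}-c\big)/x$ and $\big(g(x)^{\ln c/x}-1\big)/x$.

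The third limit is where the two proofs genuinely diverge. The paper never establishes $\pi d\,x_d\to L$. Instead it rewrites $\frac1d D'_d(x_d)$ using the defining relation $(1+h(\pi x_d))^d=\frac{A}{B}+g(\pi x_d)^d$, and then uses the \emph{convexity} of $D_d$ on $(0,\Delta)$ together with the already-proven second limit to squeeze $\frac{A}{B}+g(\pi x_d)^d$ between two quantities tending to $1+\frac{A}{B}$; this forces $g(\pi x_d)^d\to1$, and the result follows. Your route is to pin down $x_d$ directly by the two-sided comparison $D_d\big((L-\epsilon)/(\pi d)\big)<\frac{A}{B}<D_d\big((L+\epsilon)/(\pi d)\big)$, which needs only the \emph{monotonicity} part of Proposition~\ref{convex}, and yields the stronger intermediate conclusion $\pi d\,x_d\to L$. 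Either way one lands on $g(\pi x_d)^d\to1$ and finishes identically. Your approach is slightly more self-contained (it does not feed the second limit back into the third) and extracts more information about $x_d$; the paper's approach is a nice illustration of how the convexity statement, once available, lets the second limit bootstrap the third.
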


\begin{proof}
1) For the first equality, note that
\begin{equation}\label{Ddef}
D_d (\omega_d)  =  \Big[(1+h(x))^{\ln(c)/x}-g(x)^{\ln(c)/x}\Big]\Big|_{x=\frac{\ln(c)}{d}}
\end{equation}
where $h(x)=-\cos(x)+\sin(x)+\mathrm{sinc}(x)$, $g(x)=\mathrm{sinc}(x)$, and $c=1+\frac{A}{B}$.  L'Hospital's rule implies that $$\lim_{x \rightarrow 0} (1+h(x))^{\ln(c)/x} =c \quad \mathrm{and} \quad \lim_{x \rightarrow 0} g(x)^{\ln(c)/x}=1.$$  Looking at the first equality in the line above, another application of L'Hospital's rule yields 
\begin{equation}\label{interm}
\lim_{x \rightarrow 0} \frac{(1+h(x))^{\ln(c)/x} -c}{x} = c\ln(c) \Bigg[\frac{\frac{h'(x)}{1+h(x)}-1}{x}-\frac{\ln(1+h(x))-x}{x^2}  \Bigg].
\end{equation}
Observing that $h(x) = x+x^2/3+O(x^3))$, we see that $$ \lim_{x \rightarrow 0} \frac{\frac{h'(x)}{1+h(x)}-1}{x} = -\frac{1}{3}. $$  L'Hospital's rule applied to the second term on the right hand side of equation (\ref{interm}) gives
\begin{equation}\label{hpart}
\lim_{x \rightarrow 0} \frac{(1+h(x))^{\ln(c)/x} -c}{x} = \frac{-c\ln(c)}{6}.
\end{equation}
In a similar fashion,
\begin{equation}\label{interm2}
\lim_{x\rightarrow 0} \frac{g(x)^{\ln(c)/x}-1}{x} = \ln(c) \lim_{x\rightarrow 0} \Bigg[ \frac{\frac{g'(x)}{g(x)}}{x} - \frac{\ln(g(x))}{x^2}\Bigg].
\end{equation}
Observing that $g(x)=1-x^2/6+O(x^4)$, we see that $$ \lim_{x \rightarrow 0} \frac{\frac{g'(x)}{g(x)}}{x} =-\frac{1}{3}.$$  L'Hospital's rule applied to the second term on the right hand side of equation (\ref{interm2}) gives
\begin{equation}\label{gpart}
\lim_{x\rightarrow 0} \frac{g(x)^{\ln(c)/x}-1}{x} = -\frac{\ln(c)}{6}.
\end{equation}
Combining equations (\ref{Ddef}) (\ref{hpart}), and (\ref{gpart}), we obtain 
\begin{equation}
\lim_{d \rightarrow \infty} d\Big(\frac{A}{B}-D_d(\omega_d)\Big) = \frac{A}{6B}\Big[\ln\Big(1+\frac{A}{B}\Big)\Big]^2\nonumber.
\end{equation}

\noindent 2) For the second equality we have, (after simplification),
\begin{equation}
\frac{1}{d}D'_d(\omega_d) = \pi\Bigg[ \frac{\Big(1+h\big( \frac{\ln(c)}{d} \big)\Big)^{\big(\ln(c)\big)/\big(\frac{\ln(c)}{d}\big)}}{1+h\Big(\frac{\ln(c)}{d}\Big)}-\frac{g\Big(\frac{\ln(c)}{d} \Big)^{\big(\ln(c)\big)/\big( \frac{\ln(c)}{d}\big)}}{g\Big(\frac{\ln(c)}{d} \Big)}g'\Big(\frac{\ln(c)}{d} \Big)\Bigg].\nonumber
\end{equation}
In light of the previous work, this yields
\begin{equation}
\lim_{d \rightarrow \infty} \frac{1}{d} D'_d(\omega_d) = \pi\Big(1+\frac{A}{B}\Big) . \nonumber
\end{equation}
3) To derive the third equality, note that  $(1+h(\pi x_d))^d = \frac{A}{B} + g(\pi x_d)^d$ yields
\begin{equation}\label{xstuff}
\frac{1}{d}D'_d(x_d) = \pi \Bigg[ \frac{\frac{A}{B} + g(\pi x_d)^d}{1+h(\pi x_d)}h'(\pi x_d)-\frac{g(\pi x_d)^d}{g(\pi x)}g'(\pi x_d)  \Bigg].
\end{equation}
Also, the first inequality in propostion \ref{mainprop} yields that, for sufficiently large $d$ (also large enough so that $x_d < \Delta$ and $\omega_d <\Delta$), that $D_d(\omega_d) < \frac{A}{B} =D_d(x_d)$.  This implies $\omega_d < x_d$ since $D_d$ is increasing on $(0, 1/4)$. But $D_d$ is also convex on $(0, \Delta)$, so we can conclude that 
\begin{equation}\label{deriv}
D'_d(\omega_d) < D'_d(x_d).
\end{equation}
Combining this with equation (\ref{xstuff}), we obtain
\begin{eqnarray}
\Bigg[\frac{1}{d} D'_d(\omega_d) +\frac{\pi g(\pi x_d)^d}{g(\pi x_d)}g'(\pi x_d) \Bigg]\Big(\frac{1+h(\pi x_d)}{h'(\pi x_d)}\Big) < \pi\Big(\frac{A}{B}+g(\pi x_d)^d\Big) < \pi\Big(1+\frac{A}{B} \Big).\nonumber
\end{eqnarray}
The limit as $d \rightarrow \infty$ of the left hand side of the above inequality is $\pi\Big(1+\frac{A}{B} \Big)$, so $$\lim_{d \rightarrow \infty} \pi \Big(\frac{A}{B}+ g(\pi x_d)^d\Big)= \pi\Big(1+\frac{A}{B} \Big).$$  Combining this with equation (\ref{xstuff}), we obtain $$ \lim_{d \rightarrow \infty} \frac{1}{d}D'_d(x_d) = \pi\Big(1+\frac{A}{B} \Big).$$
\end{proof}

\noindent Now we complete the proof of Theorem \ref{main4}.

\noindent For large $d$, the mean value theorem implies
\begin{eqnarray}
 \frac{D_d(x_d)-D_d(\omega_d)}{x_d-\omega_d} = D'_d(\xi), \quad \xi \in (\omega_d,x_d)\nonumber,
\end{eqnarray}
so that
\begin{eqnarray}
x_d-\omega_d = \frac{\frac{A}{B}-D_d(\omega_d)}{D'_d(\xi)}\nonumber.
\end{eqnarray}
For large $d$, convexity of $D_d$ on $(0, \Delta)$ implies
\begin{equation}
\frac{d\Big(\frac{A}{B}-D_d(\omega_d)\Big)}{\frac{1}{d}D'_d (x_d)} < d^2 (x_d-\omega_d) < \frac{d\Big(\frac{A}{B}-D_d(\omega_d)\Big)}{\frac{1}{d}D'_d(\omega_d)}\nonumber.
\end{equation}
Applying Proposition \ref{mainprop} proves the theorem.

\begin{acknowledgement}
This research was supported in part by the NSF Grant DMS0856148.
\end{acknowledgement}
\end{document}